\theoremstyle{definition}
\newtheorem{theorem}{Theorem}[section]
\newtheorem{definition}[theorem]{Definition}
\newtheorem{problem}[theorem]{Problem}
\newtheorem{lemma}[theorem]{Lemma}
\newcommand{\C}{\mathbb{C}}
\def\F21{\vphantom{\quad}_2 F_1}
\begin{document}

\title{Reflections on Euler's reflection formula and an additive analogue of Legendre's duplication formula}

\author{Ritesh Goenka and Gopala Krishna Srinivasan}

\date{\today}

\subjclass[2010]{Primary: 33B15; Secondary: 44A05, 33C05}

\keywords{Gamma function; Euler's reflection formula; Legendre's duplication formula}

\address{Ritesh Goenka\\
Department of Computer Science and Engineering\\
Indian Institute of Technology Bombay\\
Mumbai\\
Maharashtra\\
INDIA}

\email{160050047@iitb.ac.in}

\address{Gopala Krishna Srinivasan\\
Department of Mathematics\\
Indian Institute of Technology Bombay\\
Mumbai\\
Maharashtra\\
INDIA}

\email{gopal@math.iitb.ac.in}

\begin{abstract}
    In this note, we look at some of the less explored aspects of the gamma function. We provide a new proof of Euler's reflection formula and discuss its significance in the theory of special functions. We also discuss a result of Landau concerning the determination of values of the gamma function using functional identities. We show that his result is sharp and extend it to complex arguments. In 1848, Oskar Schl\"omilch gave an interesting additive analogue of the duplication formula. We prove a generalized version of this formula using the theory of hypergeometric functions.
\end{abstract}

\maketitle

\section{Introduction}

The study of special functions reveals a great deal of form, structure and symmetry often related to Lie groups of symmetries of differential equations. While its connection to number theory is at least three hundred years old\footnotemark\footnotetext{For instance, Euler's factorization of $\zeta(z)$.}, in the last many decades surprising connections with combinatorics and Hopf Algebras have been unearthed. Among the special functions, the gamma function occupies a rather distinguished position. In the present note, we discuss many less known properties of this function emphasizing the form and structure alluded above with some new proofs. Parts of the note deal with some aspects that seem to have fallen into oblivion whose origins can be traced back to the classic volumes of A. M. Legendre and Oskar Schl\"omilch. In fact, a problem proposed by Legendre was resolved by E. Landau, which we present briefly for the benefit of the readers. We also show that Landau's result is complete and extend it to values of the gamma function for complex arguments. Further, we substantially generalize an additive analogue of the duplication formula given by Schl\"omilch and provide a proof for the same towards the end of this note.

The gamma function first appeared in analysis nearly 300 years ago in a letter written by Euler to Goldbach in 1729. In the notation of A. M. Legendre, the function $\Gamma(z)$ is defined as 
\begin{equation}
\label{eqn:def}
    \Gamma(z) = \int_0^{\infty} t^{z-1}e^{-t} dt,\quad \mbox{Re}\;z > 0.
\end{equation}
It is easy to show using integral by parts in the above definition that $\Gamma$ satisfies the basic functional relation 
\begin{equation}
\label{eqn:fr}
    \Gamma(z+1) = z \Gamma(z), \quad \mbox{Re}\;z > 0.
\end{equation}
The integral in the RHS of \eqref{eqn:def} defines a holomorphic function in the right half-plane $R = \{z \in \C: \mbox{Re}\;z > 0\}$. By virtue of \eqref{eqn:fr}, $\Gamma$ continues analytically as a meromorphic function on the complex plane with simple poles at $0, -1, -2, \dots$, and in particular is devoid of any essential singularities in $\C$. We refer to \cite{srinivasan} for historical details and references to original sources. It is convenient to have at our disposal the closely related beta function (so named by Binet), which is defined as
\begin{equation*}
    \mathrm{B}(z, w) = \int_0^1 t^{z-1}(1-t)^{w-1} dt, \quad \mbox{Re}\;z > 0,\;\mbox{Re}\;w > 0. 
\end{equation*}
It is possible to obtain many other integral representations for the beta function by applying simple variable transformations in the above definition. We obtain the following useful representation by setting $s = t/(1-t)$ in the above definition.
\begin{equation}
\label{eqn:beta}
    \mathrm{B}(z, w) = \int_0^\infty \frac{s^{z-1}}{(1+s)^{z+w}} ds, \quad \mbox{Re}\;z > 0,\;\mbox{Re}\;w > 0.
\end{equation}
The two functions, beta and gamma, are connected via the famous beta-gamma relation of Euler given by
\begin{equation}
\label{eqn:bg}
    \Gamma(z)\Gamma(w) = \mathrm{B}(z, w)\Gamma(z+w),  \quad \mbox{Re}\;z > 0,\;\mbox{Re}\;w > 0.
\end{equation}
The reader may note with interest that the beta-gamma relation above bears close resemblance to a corresponding formula relating the character sums of Gauss and Jacobi in algebraic number theory \cite[p. 55]{ono}.

The reflection formula, discovered by Euler, given by
\begin{equation}
\label{eqn:erf}
    \Gamma(z)\Gamma(1-z) = \frac{\pi}{\sin \pi z},
\end{equation}
expresses the gamma function as ``one half of the sine function'' in a multiplicative sense. One can thus expect factorizations of the sine function to have gamma analogues and a notable case is the famous duplication formula discovered by A. M. Legendre in 1809. The function $\varphi(z) = \sin \pi z$ factorizes as 
\begin{equation}
\label{eqn:sine-fact}
    \varphi(z) = 2 \varphi\left(\frac{z}{2}\right) \varphi \left(\frac{z}{2} + \frac{1}{2}\right),
\end{equation}
with the corresponding gamma analogue being
\begin{equation}
\label{eqn:ldf}
    \sqrt{\pi}\Gamma(z) = 2^{z-1}\Gamma\left(\frac{z}{2}\right) \Gamma\left(\frac{z}{2} + \frac{1}{2}\right).
\end{equation}
The analogy between \eqref{eqn:sine-fact} and \eqref{eqn:ldf} is striking and even more so is the analogy between the submultiple angle formula 
\begin{equation*}
    \varphi(z) = 2^{k-1}\varphi\left(\frac{z}{k}\right) \varphi \left(\frac{z}{k} + \frac{1}{k}\right)\dots\varphi\left(\frac{z}{k} + \frac{k-1}{k}\right),  
\end{equation*}
and its gamma analogue (stated below) due to Gauss (1812).
\begin{equation}
\label{eqn:gmf}
    (2\pi)^{\frac{n-1}{2}} n^{\frac{1}{2} - z} \Gamma(z) = \prod_{j=0}^{n-1} \Gamma\left(\frac{z}{n} + \frac{j}{n}\right).
\end{equation}

\section{Proof of the reflection formula}

Numerous proofs of the reflection formula are available in the literature and proofs due to Dirichlet, Dedekind and Gauss are discussed in \cite{srinivasan} in addition to a new proof based on the additive approach to gamma function with the initial value problem 
\begin{equation*}
    \frac{d^2}{dx^2} (\log y) = \sum_{n = 1}^{\infty} \frac{1}{(x+n)^2}, \quad y(1) = 1, y^{\prime}(1) = \Gamma^{\prime}(1),
\end{equation*}
as a point of departure. Complete details of Dedekind's proof are available in \cite{srinivasan2}. Here, we shall discuss a proof with the flavour of partial differential equations. Although the argument given here has some features in common with the proof in \cite[Theorem 3.3]{srinivasan}, there are some essential differences which make this proof somewhat didactic. The crucial step in the argument is the non-vanishing of the gamma function. We show that the gamma function has no zeros in the complex plane, using the duplication formula for which we give a well-known elementary proof for completeness.

\begin{theorem}[Legendre's duplication formula]
    For $z$ in the right half-plane, \eqref{eqn:ldf} holds.
\end{theorem}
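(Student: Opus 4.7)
The plan is to derive \eqref{eqn:ldf} from the beta-gamma relation \eqref{eqn:bg} by evaluating the symmetric value $\mathrm{B}(z,z)$ in two different ways. Starting from
$$\mathrm{B}(z,z) = \int_0^1 [t(1-t)]^{z-1}\, dt,$$
I would exploit the symmetry of the integrand about $t = 1/2$ via the substitution $t = (1+u)/2$, which turns the integral into $2^{1-2z}\int_{-1}^{1}(1-u^2)^{z-1}\, du$. Since the integrand is even, this equals $2^{2-2z}\int_0^1 (1-u^2)^{z-1}\, du$, and a further substitution $v = u^2$ collapses it to $\tfrac{1}{2}\mathrm{B}(1/2, z)$. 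Together these produce the identity
$$\mathrm{B}(z,z) = 2^{1-2z}\,\mathrm{B}(1/2, z).$$

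The second step is to feed both sides into \eqref{eqn:bg}, giving
$$\frac{\Gamma(z)^2}{\Gamma(2z)} = \frac{1}{2^{2z-1}}\cdot\frac{\Gamma(1/2)\,\Gamma(z)}{\Gamma(z+1/2)}.$$
One then cancels a factor of $\Gamma(z)$ — legitimate because $\Gamma$ is strictly positive on $(0,\infty)$ as the integral of a positive function, with the identity for non-real $z$ following by analytic continuation on the right half-plane $R$ — and substitutes the classical value $\Gamma(1/2) = \sqrt{\pi}$, obtained in one line from \eqref{eqn:def} via $t = u^2$ and the Gaussian integral. The result is
$$\sqrt{\pi}\,\Gamma(2z) = 2^{2z-1}\,\Gamma(z)\,\Gamma(z+\tfrac{1}{2}),$$
and the change of variable $z \mapsto z/2$ produces \eqref{eqn:ldf} exactly.

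There is no genuine obstacle; the main challenge is merely bookkeeping the various powers of $2$ through the substitutions. All integrals involved converge for $\mathrm{Re}\,z > 0$, which is precisely the domain stated in the theorem, so no analytic continuation outside $R$ is required. The only external ingredient is the evaluation $\Gamma(1/2) = \sqrt{\pi}$, a classical Gaussian computation, which keeps the argument fully within the ``elementary'' spirit announced in the preceding paragraph.
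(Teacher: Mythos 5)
Your proof is correct and follows essentially the same route as the paper: both arguments reduce the statement to the identity $\mathrm{B}(z,z) = 2^{1-2z}\,\mathrm{B}(z,1/2)$, then invoke the beta-gamma relation \eqref{eqn:bg} for positive real arguments (where cancelling $\Gamma(z)$ is harmless) and finish by analytic continuation to the right half-plane. The only difference is cosmetic: you obtain the beta identity via the algebraic substitutions $t=(1+u)/2$ and $v=u^2$, whereas the paper uses the trigonometric substitution $t=\sin^2 u$.
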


\begin{proof}
    Let $x$ be a positive real number. On setting $t = \sin^2u$ in the definition for $\mathrm{B}(x, x)$, we obtain
    \begin{equation*}
        2^{2x-1} \mathrm{B}(x, x) = 2 \int_0^{\pi/2} \sin^{2x-1}2u du = \int_0^{\pi} \sin^{2x-1}u du = 2 \int_0^{\pi/2} \sin^{2x-1}u du = \mathrm{B}(x, 1/2).
    \end{equation*}
    Appealing to the beta-gamma relation, we get the stated result for positive real values of $x$. The result follows for all $z$ in the right half-plane via analytic continuation.
\end{proof}

\begin{lemma}
    The function $\Gamma(z)$ has no zeros in $\C$.
\end{lemma}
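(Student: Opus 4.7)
The plan is to obtain a contradiction by iterating the duplication formula: any hypothetical zero $z_0$ of $\Gamma$ should propagate under the maps $z \mapsto z/2$ and $z \mapsto z/2 + 1/2$ until it lands in the unit interval, where $\Gamma$ is manifestly positive.

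I would begin by assuming $\Gamma(z_0) = 0$ for some $z_0 \in \C$. Since $z_0$ is then not a pole, neither $z_0/2$ nor $z_0/2 + 1/2$ can be a pole of $\Gamma$; otherwise \eqref{eqn:ldf} would force $z_0$ itself to be a pole. Substituting $z_0$ into the duplication formula then yields the finite identity
$$0 = \sqrt{\pi}\,\Gamma(z_0) = 2^{z_0 - 1}\,\Gamma(z_0/2)\,\Gamma(z_0/2 + 1/2),$$
so at least one of the two factors on the right must vanish. Iterating this choice, I would construct a sequence $(z_n)$ of zeros of $\Gamma$ with $z_{n+1} \in \{z_n/2,\, z_n/2 + 1/2\}$, and the recursion $|z_{n+1}| \le |z_n|/2 + 1/2$ forces $|z_n| \le \max(|z_0|, 1)$ for all $n$.

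Next I would use that $\Gamma$, being meromorphic and not identically zero, has a discrete set of zeros, so the orbit $(z_n)$ takes only finitely many values within the compact disk $|z| \le \max(|z_0|, 1)$. By the pigeonhole principle the orbit is eventually periodic, and any periodic point $z_n$ is a fixed point of some $p$-fold composition of $f(z) = z/2$ and $g(z) = z/2 + 1/2$. Every such composition equals $z \mapsto 2^{-p} z + c$ for some $c \in [0,\, 1 - 2^{-p}]$, whose unique fixed point $c/(1 - 2^{-p})$ lies in the real interval $[0, 1]$. Finally, the integral representation \eqref{eqn:def} shows $\Gamma(x) > 0$ for all $x > 0$ while $0$ is a pole, so $\Gamma$ has no zeros on $[0, 1]$ at all, producing the required contradiction.

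The main obstacle I anticipate is ruling out the possibility that the iteration cycles indefinitely among genuine zeros without ever landing somewhere recognizable. This is resolved by the observation that $\{f, g\}$ is a contractive iterated function system with attractor $[0, 1]$, so every periodic orbit pins down to a real fixed point in that interval, where the elementary positivity of $\Gamma$ closes the argument.
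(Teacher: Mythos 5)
Your argument is correct, and it rests on the same device as the paper's proof---propagating a hypothetical zero $z_0$ through the maps $z\mapsto z/2$ and $z\mapsto z/2+1/2$ supplied by \eqref{eqn:ldf}---but it closes the contradiction by a genuinely different route. The paper shows the resulting zeros $z_n$ are pairwise distinct (via an imaginary-part computation that ultimately uses $\Gamma>0$ on the positive reals) and bounded, so they accumulate, contradicting the absence of essential singularities. You instead use finiteness of the zeros of $\Gamma$ in the disk $|z|\le\max(|z_0|,1)$ to force a repetition $z_j=z_k$, note that $z_j$ is then a fixed point of a $p$-fold composition $z\mapsto 2^{-p}z+c$ with $c\in[0,1-2^{-p}]$, hence a real point of $[0,1]$, and finish with the positivity of $\Gamma$ on $(0,1]$ from \eqref{eqn:def} together with the pole at $0$. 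So both proofs consume the same two ingredients---positivity on the positive axis and the fact that zeros of the meromorphic function $\Gamma$ cannot accumulate in $\C$---but in opposite order, and your fixed-point finish replaces the paper's distinctness argument. Two points to tighten: (i) \eqref{eqn:ldf} is proved in the paper only on the right half-plane, so before substituting an arbitrary $z_0$ you should either remark that the identity continues to an identity of meromorphic functions on all of $\C$, or first move the zero into the right half-plane via \eqref{eqn:fr} (if $\Gamma(z_0)=0$ then $\Gamma(z_0+1)=z_0\Gamma(z_0)=0$), as the paper does; (ii) ``discrete'' by itself does not yield finitely many zeros in a compact disk---you need that the zero set has no accumulation point in $\C$, which holds because such a point could be neither a regular point (identity theorem) nor a pole (where $|\Gamma|\to\infty$); this is precisely the no-essential-singularity fact the paper invokes, so it deserves the same explicit mention in your write-up.
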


\begin{proof}
    Suppose $z_0 \in \mathbb{C}$ is a zero of the gamma function. Since $\Gamma(z_0 + n)$ must also vanish for every natural number $n$, we may as well assume that $\textrm{Re}(z_0) > 0$. Now, \eqref{eqn:ldf} implies
    \begin{equation*}
        \Gamma\left(\frac{z_0}{2}\right) = 0,\; \textrm{or} \; \Gamma\left(\frac{z_0}{2} + \frac{1}{2}\right) = 0.
    \end{equation*}
    Define $z_1 = \frac{z_0}{2} + \frac{\epsilon_1}{2}$, where $\epsilon_1 = 0$ and $\epsilon = 1$ in the former and latter case respectively. Similarly, we obtain another zero $z_2 = \frac{z_1}{2} + \frac{\epsilon_2}{2}$ with $\epsilon_2 = 0$ or $1$. Continuing in this manner, we obtain a sequence of zeros $\{z_n\}_{n \in \mathbb{N} \cup \{0\}}$ satisfying the recursive relation
    \begin{equation*}
        z_n = \frac{z_{n-1}}{2} + \frac{\epsilon_n}{2},
    \end{equation*}
    with $\epsilon_n = 0$ or $1$, $\forall n \in \mathbb{N}$. Repeated application of the above relation yields
    \begin{equation}
    \label{eqn:exp}
        z_n = \frac{z_0}{2^n} + \sum_{j=1}^n \frac{\epsilon_j}{2^{n+1-j}},\;\forall n \in \mathbb{N}.
    \end{equation}
    We claim that if $j \ne k$, then $z_j \ne z_k$. Assume to the contrary that $z_j = z_k$ for some $j \ne k$. We may assume without loss of generality that $j < k$. Then, from \eqref{eqn:exp}, we have
    \begin{equation*}
        \textrm{Im}\left[\left(\frac{1}{2^j} - \frac{1}{2^k}\right) z_0\right] = 0,
    \end{equation*}
    which is a contradiction since the Gamma function has no positive real zeros (as can be easily seen from \eqref{eqn:def}). The sequence $\{z_n\}$ is evidently bounded since $|z_n| \le |z_0| + 1, \forall n \in \mathbb{N}$ from \eqref{eqn:exp}. Hence, it must contain a subsequence which converges to a point $w$. The point $w$ must be an essential singularity, but this is a contradiction since the only singularities of the gamma function are poles.
\end{proof}

\begin{lemma}[Liouville's theorem for harmonic functions]
\label{liouville}
    Assume that $h: \mathbb R^n \longrightarrow \mathbb R$ is a harmonic function satisfying the condition
    \begin{equation}
    \label{eqn:est}
        |h(x)| \leq |P(x)|,\;\forall x \in \mathbb{R}^n,
    \end{equation}
    where $P(x)$ is a polynomial of degree $k$. Then, $h(x)$ is itself a polynomial of degree at most $k$.
\end{lemma}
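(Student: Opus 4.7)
The plan is to prove this via a Cauchy-style estimate on derivatives of harmonic functions. First, recall that each partial derivative of a harmonic function is again harmonic (one differentiates the equation $\Delta h = 0$). Combining the mean value property with the divergence theorem on a ball $B(x_0, r)$ gives the classical gradient estimate
\[
|\partial_j h(x_0)| \le \frac{n}{r}\sup_{\partial B(x_0, r)} |h|.
\]
Iterating this estimate on $k+1$ nested concentric balls of radii $r, \tfrac{k}{k+1}r, \dots, \tfrac{1}{k+1}r$ yields, for any multi-index $\alpha$ with $|\alpha| = k+1$, an inequality of the form
\[
|D^\alpha h(x_0)| \le \frac{C_{n,k}}{r^{k+1}}\sup_{B(x_0, r)} |h|,
\]
where $C_{n,k}$ depends only on $n$ and $k$.

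Next, I would exploit the polynomial bound. Since $P$ has degree $k$, there is a constant $A>0$ such that $|P(x)| \le A(1+|x|^k)$; hence by hypothesis $\sup_{B(x_0, r)} |h| \le A(1+(|x_0|+r)^k)$. Substituting into the derivative estimate shows that for fixed $x_0$ the right-hand side is of order $r^k/r^{k+1} = 1/r$ as $r \to \infty$. Sending $r \to \infty$ forces $D^\alpha h(x_0) = 0$ for every $x_0 \in \mathbb{R}^n$ and every multi-index $\alpha$ of length $k+1$. A standard Taylor expansion (or the observation that a $C^\infty$ function whose $(k+1)$-th partials all vanish identically must be a polynomial of degree at most $k$) then yields the desired conclusion.

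The main technical point is establishing the iterated derivative estimate. This is a classical piece of elliptic PDE theory, and only the scaling $\sup |h|/r^{k+1}$ (not the precise value of the constant $C_{n,k}$) matters for the argument, so the iteration can be carried out with modest care and the constants swept into an unspecified $C_{n,k}$.
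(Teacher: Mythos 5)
Your proof is correct, but it follows a genuinely different route from the one in the paper. You argue via classical interior derivative estimates for harmonic functions: the gradient bound $|\partial_j h(x_0)| \leq \frac{n}{r}\sup_{\partial B(x_0,r)}|h|$ obtained from the mean value property and the divergence theorem, iterated on nested balls to give $|D^\alpha h(x_0)| \leq C_{n,k}\, r^{-(k+1)}\sup_{B(x_0,r)}|h|$ for $|\alpha| = k+1$; the polynomial growth hypothesis then makes the right-hand side decay like $1/r$, so all derivatives of order $k+1$ vanish identically and $h$ is a polynomial of degree at most $k$. The paper instead observes that the bound \eqref{eqn:est} makes $h$ a tempered distribution, takes the Fourier transform of $\Delta h = 0$ to get $|\xi|^2\widehat{h}(\xi) = 0$, invokes the structure theorem for distributions supported at a point to write $\widehat{h}$ as a finite combination of $\delta_0$ and its derivatives, and concludes that $h$ is a polynomial whose degree is forced to be at most $k$ by \eqref{eqn:est}. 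Your argument is more elementary and self-contained, requiring only the mean value property and a Taylor-expansion finish, while the paper's argument is shorter once one grants the distributional machinery (tempered distributions and the point-support structure theorem cited from Strichartz) and reflects the ``folk-lore'' Fourier-analytic viewpoint. Both routes yield exactly the stated conclusion, and the only item you leave implicit -- the precise constant in the iterated estimate -- is indeed irrelevant, since only the scaling in $r$ matters.
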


\begin{proof}
    The proof is certainly folk-lore. The estimate \eqref{eqn:est} implies that the function $h(x)$ defines a tempered distribution \cite{strichartz}, and so taking the Fourier transform of $\Delta h = 0$, we conclude
    \begin{equation*}
        |\xi|^2 \widehat{h}(\xi) = 0.
    \end{equation*}
    Thus, $\widehat{h}$ is a distribution with point support at the origin, and therefore is a linear combination of Dirac delta and finitely many of its derivatives \cite[p. 80]{strichartz}, i.e.
    \begin{equation*}
        \widehat{h} = \sum_{|\alpha| \leq k} c_{\alpha} \delta_0^{(\alpha)}.
    \end{equation*}
    Thus, $h(x)$ itself must be a polynomial and the estimate \eqref{eqn:est} forces the degree of $h$ to be at most $k$.
\end{proof}

We are now ready to prove the reflection formula. Since this part of the argument is identical to that in \cite[Theorem 3.3]{srinivasan}, the proof is kept to a bare minimum.

\begin{theorem}[Euler's reflection formula]
    For $z \in \mathbb{C}\setminus\mathbb{Z}$, \eqref{eqn:erf} holds.
\end{theorem}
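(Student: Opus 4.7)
My plan is to introduce the auxiliary function
\[
    f(z) = \frac{\Gamma(z)\Gamma(1-z)\sin(\pi z)}{\pi},
\]
and to show $f \equiv 1$. First, the simple poles of $\Gamma(z)$ at $0, -1, -2, \dots$ and of $\Gamma(1-z)$ at $1, 2, 3, \dots$ are exactly cancelled by the simple zeros of $\sin(\pi z)$, so $f$ extends to an entire function; combined with the preceding lemma on the non-vanishing of $\Gamma$, $f$ is also nowhere vanishing on $\mathbb{C}$. I would next record three functional identities. Using $\Gamma(z+1) = z\Gamma(z)$, the identity $\sin(\pi(z+1)) = -\sin(\pi z)$, and the evident invariance of the definition under $z \mapsto 1-z$, one obtains
\[
    f(z+1) = f(z), \qquad f(1-z) = f(z).
\]
A third identity is produced by combining Legendre's duplication formula \eqref{eqn:ldf} applied at both $z$ and $1-z$ with $\sin(\pi z) = 2\sin(\pi z/2)\cos(\pi z/2)$; a short computation yields the multiplicative duplication
\[
    f(z) = f\!\left(\frac{z}{2}\right) f\!\left(\frac{z+1}{2}\right).
\]

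Since $f$ is entire and non-vanishing, the real-valued function $u(z) := \log|f(z)|$ is harmonic on $\mathbb{R}^{2}$. The main obstacle I expect is converting the multiplicative duplication into a polynomial growth bound on $u$ of the kind needed in Lemma~\ref{liouville}. Taking absolute values and logarithms, the duplication gives $u(z) = u(z/2) + u((z+1)/2)$, whence $|u(z)| \leq 2 \max_{w \in S_{1}(z)} |u(w)|$ with $S_{1}(z) = \{z/2,\; (z+1)/2\}$. Iterating $n$ times, all $2^{n}$ arguments lie in the disk $\{|w| \leq |z|/2^{n} + 1\}$; choosing $n \approx \log_{2}|z|$ collapses this disk into $|w| \leq 2$ and produces $|u(z)| \leq 2M|z|$ for $|z|$ large, where $M := \max_{|w| \leq 2} |u(w)|$ is finite by continuity. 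Hence $u$ satisfies the hypothesis of Lemma~\ref{liouville} with a polynomial of degree one.

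Lemma~\ref{liouville} then forces $u(x + iy) = ax + by + c$. The periodicity $u(z+1) = u(z)$ kills the coefficient $a$; the subsequent reflection $u(1-z) = u(z)$ kills the coefficient $b$; and a routine limit at $z = 1$ shows $f(1) = 1$, forcing $c = 0$. Thus $|f| \equiv 1$, and an entire function of constant modulus must be constant by the open mapping theorem, so $f \equiv c_{0}$ with $|c_{0}| = 1$. Finally, evaluating at $z = 1/2$ and invoking $\Gamma(1/2) = \sqrt{\pi}$ (which follows from Legendre's duplication at $z = 1$), we obtain $c_{0} = \Gamma(1/2)^{2}/\pi = 1$. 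Hence $f \equiv 1$, which is exactly Euler's reflection formula \eqref{eqn:erf} on $\mathbb{C} \setminus \mathbb{Z}$.
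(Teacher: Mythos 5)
Your argument is correct and shares the paper's overall architecture --- the auxiliary function $\Gamma(z)\Gamma(1-z)\sin \pi z$ (up to normalization), the non-vanishing lemma, Lemma~\ref{liouville}, and symmetry arguments to pin down the constants --- but it reaches the crucial growth estimate by a genuinely different mechanism. The paper bounds $|F(z)|=\exp(\mathrm{Re}\,G(z))$ by $C e^{\pi|y|}$ on a period strip (which ultimately rests on the behaviour of $\Gamma$ and $\sin$ there) and then invokes one-periodicity; you instead derive the self-similar identity $f(z)=f(z/2)\,f((z+1)/2)$ from Legendre's duplication formula together with the sine double-angle formula (your verification is right: the two duplications at $z$ and $1-z$ produce the factors $\sqrt{\pi}\,2^{1-z}\Gamma(z)$ and $\sqrt{\pi}\,2^{z}\Gamma(1-z)$, and the powers of $2$ cancel), and then iterate the additive relation $u(z)=u(z/2)+u((z+1)/2)$ for $u=\log|f|$ to get $|u(z)|\le 2M|z|$ from nothing more than continuity of $f$ on a fixed disk. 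Your route is more self-contained --- no asymptotics or strip estimates for $\Gamma$ are needed, only the duplication formula already proved in the paper (which extends from the right half-plane to an identity of entire functions by analytic continuation) --- at the cost of the $2^{n}$-point bookkeeping; the paper's route is shorter once the strip bound is granted. Working directly with the harmonic function $\log|f|$ also lets you bypass the paper's step of writing $F=e^{G}$ with $G$ entire and one-periodic.

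One small repair: the bound $|u(z)|\le 2M|z|+M$ is not literally of the form $|u|\le|P|$ with $P$ of degree one, since $|z|=\sqrt{x^{2}+y^{2}}$ is not a polynomial and the modulus of a genuine degree-one polynomial vanishes along a line. Either dominate by the degree-two polynomial $P(x,y)=2M(x^{2}+y^{2})+M'$ and apply Lemma~\ref{liouville} with $k=2$ --- your periodicity and reflection symmetries then annihilate every non-constant term of a harmonic polynomial of degree at most two, so the remainder of your argument goes through verbatim --- or invoke the gradient-estimate form of Liouville's theorem for harmonic functions of linear growth. (The paper's own proof glosses the same point when it says ``bounded by a linear polynomial''.) With that adjustment, your conclusion $|f|\equiv 1$, hence $f$ constant and $f\equiv 1$ via $f(1)=1$ or $\Gamma(1/2)=\sqrt{\pi}$, is sound.
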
 

\begin{proof}
    Observe that the function $F$ defined by
    \begin{equation*}
        F(z) = z\Gamma(z)\Gamma(1-z)\frac{\sin \pi z}{z},
    \end{equation*}
    is an entire function devoid of zeros and has period one. Hence, $F(z) = \exp G(z)$ for some one-periodic entire function $G(z)$, and so
    \begin{equation*}
        \exp (\mbox{Re}\;G(z)) = |F(z)| \leq C\exp \pi |y|,\quad |y| \leq 1/2.
    \end{equation*}
    We conclude by one-periodicity that the harmonic function Re$(G(z))$ is bounded by a linear polynomial and hence by Lemma \ref{liouville}, we obtain $G(z) = A + Bz$. The constants $A$ and $B$ can be determined easily, namely $A = \ln \pi $ and $B = 0$, and the result follows.
\end{proof}

The reflection formula is also significant from the point of view of harmonic analysis inasmuch as it appears a very special case of Ramanujan's Master formula \cite[equation (1.30)]{hardy} which was recast by G. H. Hardy as a Paley Wiener theorem for Mellin transforms, that we state below. We also mention the more recent multi-dimensional analogues available in \cite{ding}.

\begin{theorem}[Ramanujan]
    Assume that $\phi$ is a holomorphic function in a half-plane $\mbox{Re}\;z > -\eta\; (\eta > 0)$ and satisfies an estimate of the form
    \begin{equation*}
        |\phi(z)| \le C \exp(q\;\mbox{Re}\;z + r\;|\mbox{Im}\;z|)
    \end{equation*}
    for certain constants $C$, $q$ and $r$ with $0 < r < \pi$. Then,
    \begin{equation}
    \label{eqn:ram}
        \int_0^\infty x^{s-1} \psi(x) dx = \frac{\pi}{\sin \pi s} \phi(-s),
    \end{equation}
    where $\psi(x) = \sum_{n=0}^\infty (-x)^n\phi(n)$ and $0 < \mbox{Re}\;s < \eta$.
\end{theorem}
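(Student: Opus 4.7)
The plan is to realize $\psi$ as an inverse Mellin transform along a vertical line via a Mellin--Barnes contour integral, and then read off the direct Mellin transform by appeal to Mellin inversion.

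First, the hypothesis evaluated at non-negative integers gives $|\phi(n)| \le C e^{qn}$, so the power series $\psi(x) = \sum_{n=0}^\infty (-x)^n \phi(n)$ has radius of convergence at least $e^{-q}$ and is holomorphic on a disk about the origin. To continue $\psi$ to all of $(0,\infty)$, I would introduce the Mellin--Barnes integral
\begin{equation*}
\Psi(x) = \frac{1}{2\pi i}\int_{c - i\infty}^{c + i\infty}\frac{\pi\,\phi(-z)}{\sin\pi z}\,x^{-z}\,dz,
\end{equation*}
where $c$ is any real number with $0 < c < \min(\eta, 1)$. Using the elementary estimate $|1/\sin \pi z| = O(e^{-\pi|\mathrm{Im}\,z|})$ combined with the hypothesized bound on $\phi$, the integrand is bounded on this vertical line by a constant multiple of $e^{-(\pi - r)|\mathrm{Im}\,z|}$, and the hypothesis $r < \pi$ ensures absolute convergence.

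Next, I would deform the contour leftward past the simple poles of $\pi/\sin\pi z$ at $z = 0, -1, \ldots, -N$. The residue at $z = -n$ is exactly $(-x)^n \phi(n)$, and the remainder integral along $\mathrm{Re}\,z = -N - 1/2$ is bounded (using the same exponential vertical decay together with $|\phi(-z)| \le C e^{q(N + 1/2)}$ on that line) by a constant times $(x e^q)^{N+1/2}$. Letting $N \to \infty$ shows $\Psi(x) = \psi(x)$ throughout $|x| < e^{-q}$, so $\Psi$ is the desired analytic continuation of $\psi$. The same representation also yields $\Psi(x) = O(x^{-c})$ as $x \to \infty$, and shifting past only the pole at $z = 0$ gives $\Psi(x) = \phi(0) + O(x^{c'})$ as $x \to 0^+$ for any $c'$ with $0 < c' < \min(1, \eta)$. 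Hence the Mellin transform $\int_0^\infty x^{s-1}\Psi(x)\,dx$ converges absolutely for $0 < \mathrm{Re}\,s < c$.

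Finally, since $\Psi$ has been displayed as the inverse Mellin transform of $\pi\phi(-z)/\sin\pi z$ along a vertical line in the strip of holomorphy of that kernel, the Mellin inversion theorem delivers \eqref{eqn:ram} on the line $\mathrm{Re}\,s = c$, and letting $c$ vary in $(0,\eta)$ gives the identity throughout the strip $0 < \mathrm{Re}\,s < \eta$. The main technical hurdle is the strict inequality $r < \pi$: it is precisely what makes the Mellin--Barnes integrand integrable in the imaginary direction, and it is indispensable both for the analytic continuation step and for the concluding appeal to Mellin inversion. Once this decay is in hand, the contour shifts and residue calculations are routine.
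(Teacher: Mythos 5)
The paper does not actually prove this theorem: it is quoted from Hardy \cite{hardy} as background (the text only remarks that Hardy shows $\psi$ continues analytically past $x=e^{-q}$), so there is no in-paper argument to compare against. Your Mellin--Barnes proof is precisely Hardy's classical argument, and its core is sound: the bound $|1/\sin\pi z|=O(e^{-\pi|\mathrm{Im}\,z|})$ on the relevant vertical lines together with $r<\pi$ gives absolute convergence of $\Psi$; the leftward contour shift picks up residues $(-x)^n\phi(n)$ with remainder $O((xe^{q})^{N+1/2})$, identifying $\Psi$ with $\psi$ on $(0,e^{-q})$; the two-sided asymptotics $\Psi(x)=\phi(0)+O(x^{c'})$ at $0$ and $O(x^{-c})$ at $\infty$ give absolute convergence of the Mellin integral; and the exponential decay of $\pi\phi(-z)/\sin\pi z$ on vertical lines legitimizes the appeal to Mellin inversion (you should, for completeness, also note that the horizontal segments in the contour shift vanish, but that is routine with the same decay).

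The one genuine flaw is your last sentence: you cannot ``let $c$ vary in $(0,\eta)$,'' because the contour is pinned in the strip $0<\mathrm{Re}\,z<\min(1,\eta)$ by the pole of $\pi/\sin\pi z$ at $z=1$, and your construction only uses $c<\min(1,\eta)$. Indeed, when $\eta>1$ the conclusion as literally stated can fail as an absolutely convergent integral: take $\phi\equiv 1$ (holomorphic on every half-plane, satisfying the growth bound), so $\psi(x)=1/(1+x)$, and $\int_0^\infty x^{s-1}(1+x)^{-1}\,dx$ diverges for $\mathrm{Re}\,s\ge 1$, matching the pole of $\pi\phi(-s)/\sin\pi s$ at $s=1$. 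So what you have actually proved is the identity for $0<\mathrm{Re}\,s<\min(1,\eta)$, which is the correct range (Hardy's own formulation takes $\eta<1$, or else one must interpret the left-hand side by analytic continuation); the overreach is really an imprecision inherited from the statement as quoted in the paper, but you should either restrict the strip or add the continuation argument rather than assert the extension by varying $c$.
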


Note that the series appearing in the expression for $\psi(x)$ converges when $0 < x < \exp(-q)$, but Hardy shows that it continues analytically to a sector containing $[0, \infty)$. Further, note that the integral on the LHS of \eqref{eqn:ram} is the Mellin transform of the function $\psi$. Setting $\phi(x) = 1$ in \eqref{eqn:ram} and using \eqref{eqn:beta}, we obtain the reflection formula for $0 < x < 1$, which can be extended to $\mathbb{C} \setminus \mathbb{Z}$ using analytic continuation.

\section{ Legendre's problems and Landau's theorem}  

Legendre, in his book \cite{legendre}, takes up the problem of determining the values of the gamma function given its values on a subset of $(0, 1]$. The first problem of this kind is finding the least number of values among
\begin{equation}
\label{eqn:legp}
    \Gamma\left(\frac{1}{m}\right), \Gamma\left(\frac{2}{m}\right), \dots, \Gamma\left(\frac{m-1}{m}\right),
\end{equation}
from which all others may be determined by employing \eqref{eqn:fr}, \eqref{eqn:erf} and \eqref{eqn:gmf}. After taking logarithms, the problem translates into a question of computing the rank of a certain matrix with entries $0, 1$ and $-1$. M. A. Stern has shown \cite{stern} that the number of independent numbers in \eqref{eqn:legp} is $\frac{1}{2} \varphi(m)$, where $\varphi$ is the Euler's totient function. An elegant solution to the above problem expressed in terms of a structure theorem for finite abelian groups appears in \cite{nijenhuis}.

The second problem is measure theoretic. Let us say that a subset $S \subseteq (0, \infty)$ is a \textit{fundamental set} for $\Gamma(x)$ if the restriction $\Gamma\vert_{S}$ determines $\Gamma$ completely on $(0, \infty)$ through finitely many applications of \eqref{eqn:fr}, \eqref{eqn:erf} and \eqref{eqn:gmf}. For example, the functional relation \eqref{eqn:fr} immediately gives $(0, 1]$ as a fundamental set. The reflection formula now gives $(0, 1/2) \cup \{1\}$ as a fundamental set. Using the duplication formula and \eqref{eqn:erf}, one can show without much difficulty that the fundamental set $(0, 1/2) \cup \{1\}$ may be shrunk to $(0, 1/3] \cup \{1\}$. In \cite[p. 28]{godefroy}, it is claimed that the fundamental set may be shrunk to $(0, 1/4)$ by virtue of the duplication formula, though it is not so obvious and we provide an elementary argument for the same below. 

Let us introduce some notations before proceeding further. For any subsets $A, B \subseteq (0, \infty)$, we define $A \preceq B$ if the values of the gamma function on $B$ completely determine its values on $A$. Clearly, the relation $\preceq$ is both reflexive and transitive, with the following additional properties: 
\begin{itemize}
    \item[(1)] $A \subseteq B$ implies $A \preceq B$,
    \item[(2)] $A_1 \preceq B_1,\;A_2 \preceq B_2$ implies $A_1 \cup A_2 \preceq B_1 \cup B_2$.
\end{itemize}
\begin{theorem}
    The set $(0, 1/4] \cup \{1/3, 1\}$ is a fundamental set for the gamma function.
\end{theorem}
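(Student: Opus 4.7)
The plan is to reduce the problem via \eqref{eqn:fr} and \eqref{eqn:erf} to determining $\Gamma$ on $(1/4, 1/2) \setminus \{1/3\}$, and then to exploit a single recursion built from \eqref{eqn:ldf} and \eqref{eqn:erf} that terminates in finitely many steps for each individual argument.

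For the reductions, repeated application of \eqref{eqn:fr} propagates values on $(0, 1]$ to $(1, \infty)$, and \eqref{eqn:erf} propagates those on $(0, 1/2)$ to $(1/2, 1)$; evaluating \eqref{eqn:ldf} at $z = 1$ yields $\Gamma(1/2) = \sqrt{\pi}$. Since $(0, 1/4] \cup \{1/3, 1\}$ is given, the task reduces to recovering $\Gamma(x)$ for $x \in (1/4, 1/2) \setminus \{1/3\}$.

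Next I would derive a central identity. Starting from \eqref{eqn:ldf} in the form $\sqrt{\pi}\,\Gamma(2x) = 2^{2x-1}\,\Gamma(x)\,\Gamma(x+1/2)$, applying \eqref{eqn:erf} to both $\Gamma(2x)$ and $\Gamma(x+1/2)$, and simplifying via $\sin 2\pi x = 2\sin\pi x\cos\pi x$ and $\sin\pi(x+1/2) = \cos\pi x$, one obtains
\begin{equation*}
\Gamma(x) = \frac{\sqrt{\pi}\,\Gamma(1/2 - x)}{2^{2x}\sin(\pi x)\,\Gamma(1 - 2x)}, \qquad x \in (1/4, 1/2).
\end{equation*}
For such $x$, $1/2 - x \in (0, 1/4)$, so $\Gamma(1/2 - x)$ is given, and the only unknown on the right-hand side is $\Gamma(1 - 2x)$. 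Setting $T(x) = 1 - 2x$, if $x \in [3/8, 1/2)$ then $T(x) \in (0, 1/4]$ and the identity closes immediately. Otherwise $x \in (1/4, 3/8) \setminus \{1/3\}$, $T(x)$ lies again in $(1/4, 1/2)$, and I would iterate: put $y_0 = x$ and $y_{k+1} = T(y_k)$ while $y_k \in (1/4, 3/8)$. From $T(y) - 1/3 = -2(y - 1/3)$, induction gives $|y_k - 1/3| = 2^k|x - 1/3|$, which is unbounded for $x \neq 1/3$; meanwhile $y_k \in (1/4, 3/8)$ forces $|y_k - 1/3| < 1/12$. Hence a first finite $N$ exists with $y_N \notin (1/4, 3/8)$, and since $T$ maps $(1/4, 3/8)$ into $(1/4, 1/2)$ we in fact have $y_N \in [3/8, 1/2)$. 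Back-substituting through the identity then yields $\Gamma(y_{N-1}), \ldots, \Gamma(y_0) = \Gamma(x)$ in finitely many applications of \eqref{eqn:fr}, \eqref{eqn:erf}, \eqref{eqn:ldf}.

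The main obstacle is verifying this termination, which hinges on $T$ being expanding about its fixed point $1/3$; observe also that $T^{-1}(\{1/3\}) = \{1/3\}$ ensures no iterate $y_k$ equals $1/3$ unless $x = 1/3$. The datum $\Gamma(1/3)$ must be included in the fundamental set precisely because $1/3$ is the fixed point of this recursion and thus not reachable from $(0, 1/4]$ via these operations.
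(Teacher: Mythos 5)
Your proof is correct, but it is organized rather differently from the one in the paper. Your central identity $\Gamma(x) = \sqrt{\pi}\,\Gamma(1/2-x)\big/\bigl(2^{2x}\sin(\pi x)\,\Gamma(1-2x)\bigr)$ is exactly the combination of \eqref{eqn:ldf} and \eqref{eqn:erf} that the paper uses in its first step (to get $(0,1/2)\preceq(0,1/3]$), but you then push this single identity all the way: you iterate the affine map $T(x)=1-2x$ on $(1/4,1/2)$ and use the fact that $T$ expands away from its fixed point $1/3$ (so each orbit escapes $(1/4,3/8)$ in finitely many steps) to determine every $\Gamma(x)$, $x\in(1/4,1/2)\setminus\{1/3\}$, by a finite back-substitution. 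The paper instead splits the work in two: after reducing $(0,1/2)$ to $(0,1/3]$ as above, it builds a second, different identity \eqref{eqn:comb} relating $\Gamma(\alpha+1/4)$ to $\Gamma(4\alpha)$, $\Gamma(2\alpha)$ and $\Gamma(1/4-\alpha)$, and grows the known interval $(0,\,1/4+1/4^2+\cdots+1/4^k]$ from below, reaching $(0,1/3)$ only in the limit. Your per-point orbit argument buys a more uniform and arguably cleaner treatment (one identity, one termination lemma, explicit finite bound $2^N|x-1/3|\ge 1/12$ on the number of steps), and it makes transparent why $1/3$, the fixed point of $T$, must be supplied as a separate datum; the paper's version has the advantage of phrasing everything in terms of the order relation $\preceq$ on intervals, which is then reused verbatim in the Landau-type measure arguments that follow. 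All the boundary checks in your argument (non-vanishing of $\sin\pi x$ and $\cos\pi x$ on $(1/4,1/2)$, the arguments $1/2-x$ and, at the terminal step, $1-2y_N$ landing in $(0,1/4]$, and the final propagation to $(1/2,1)$, $\{1/2\}$ and $(1,\infty)$ via \eqref{eqn:erf}, \eqref{eqn:ldf} and \eqref{eqn:fr}) go through, so I see no gap.
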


\begin{proof}
    We proof this in two steps. We first show that $(0, 1/2) \preceq (0, 1/3]$ and then show that $(0, 1/3] \preceq (0, 1/4] \cup \{1/3\}$ to conclude that $(0, 1/4] \cup \{1/3, 1\}$ is a fundamental set for  the gamma function.

    From \eqref{eqn:ldf}, it is evident that for any $z \in (0, \infty)$, we have $\{z/2+1/2\} \preceq \{z, z/2\}$. For $z \in (0, 1/3]$, we have $z/2, z \in (0, 1/3]$ and $z/2 + 1/2 \in (1/2, 2/3]$. This implies $(1/2, 2/3] \preceq (0, 1/3]$. From \eqref{eqn:erf}, we have $[1/3, 1/2) \preceq (1/2, 2/3]$, which further implies $[1/3, 1/2) \preceq (0, 1/3]$. Hence, $(0, 1/2) \preceq (0, 1/3]$.
    
    To show $(0, 1/3] \preceq (0, 1/4] \cup \{1/3\}$, we choose a suitable combination of \eqref{eqn:erf} and \eqref{eqn:ldf} in order to apply a technique similar to the first part of the proof. Setting $z = 2\alpha + 1/2$ and $z = 4\alpha$ in \eqref{eqn:ldf}, we obtain
    \begin{align*}
        \sqrt{\pi} \Gamma(2\alpha + 1/2) &= 2^{2\alpha - 1/2} \Gamma(\alpha + 1/4) \Gamma(\alpha + 3/4),\; \textrm{and}\\
        \sqrt{\pi} \Gamma(4\alpha) &= 2^{4\alpha - 1} \Gamma(2\alpha) \Gamma(2\alpha + 1/2),
    \end{align*}
    respectively. And setting $z = \alpha + 3/4$ in \eqref{eqn:erf}, we obtain
    \begin{equation*}
        \Gamma(\alpha + 3/4) \Gamma (1/4 - \alpha) = \frac{\pi}{\sin \pi (\alpha + 3/4)}.
    \end{equation*}
    Multiplying the three equations obtained above and cancelling like terms (since gamma is non-vanishing), we obtain
    \begin{equation}
    \label{eqn:comb}
        \Gamma(4\alpha) \Gamma (1/4 - \alpha) = \frac{2^{6 \alpha - 3/2}}{\sin \pi (\alpha + 3/4)} \Gamma(2\alpha) \Gamma(\alpha + 1/4).
    \end{equation}
    From the above equation, it is evident that for any $\alpha \in (0, 1/4)$, we have  $\{\alpha + 1/4\} \preceq \{4\alpha, 2\alpha, 1/4 - \alpha\}$. For $\alpha \in (0, 1/4^2)$, we have $4\alpha, 2\alpha, 1/4 - \alpha \in (0, 1/4)$ and $\alpha + 1/4 \in (1/4, 1/4 + 1/4^2)$. This implies $(1/4, 1/4 + 1/4^2) \preceq (0, 1/4)$. Therefore, we obtain $(0, 1/4 + 1/4^2) \preceq (1, 1/4]$. Now, for $\alpha \in (0, 1/4^2 + 1/4^3)$, we have $2\alpha, 4\alpha, 1/4 - \alpha \in (0, 1/4 + 1/4^2)$ and $\alpha + 1/4 \in (1/4, 1/4 + 1/4^2 + 1/4^3)$. This implies $(1/4, 1/4 + 1/4^2 + 1/4^3) \preceq (0, 1/4 + 1/4^2)$. Thus, we obtain $(0, 1/4 + 1/4^2 + 1/4^3) \preceq (0, 1/4]$. Repeating this process infinitely many times, we obtain $(0, 1/3) \preceq (0, 1/4]$. Finally, we have $(0, 1/3] \preceq (0, 1/4] \cup \{1/3\}$.
\end{proof}

\begin{definition}[Germinating function]
    A continuous function $f: (0, \infty) \longrightarrow \mathbb{R}$ is said to be a \textit{germinating function} if there are measurable subsets $S \subseteq (0, \infty)$ of arbitrarily small measure such that $f\vert_S$ determines $f$ completely through finitely many applications of functional identities.
\end{definition}

A. M. Legendre \cite{legendre} posed the problem of finding fundamental subsets having measure as small as possible. E. Landau \cite{landau} provided the solution to this problem by proving that the gamma function is a germinating function. For the benefit of the readers, we include Landau's proof of this result. The following lemma would be needed.

\begin{lemma}[Landau]
\label{landau}
    For any $\delta \in (0,1]$ and an arbitrary interval $(\alpha, \beta]$ with $0 \le \alpha < \beta \le 1$, we can find a non-negative integer $m$ and intervals $I, \{J_i\;:\; 1 \leq i \leq m\}$ 
    such that
    \begin{equation*}
        (\alpha, \beta] \preceq I \cup J_1 \cup \dots \cup J_m,
    \end{equation*}
    where $I \subseteq (0, \delta/2]$ and $J_i \subseteq (\delta/2 , 1]$ for each $1 \leq i \leq m$. Moreover, the lengths of these intervals satisfy the conditions
    \begin{equation*}
        |I| + \sum_{i=1}^m |J_i| = \beta - \alpha, \text{ and } |I| > \frac{\delta}{4} (\beta - \alpha).
    \end{equation*}
\end{lemma}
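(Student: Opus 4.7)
The plan is to iteratively apply Legendre's duplication formula to halve the input interval, peeling off one piece at each step and pushing the other toward $(0, \delta/2]$.

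The basic move comes from \eqref{eqn:ldf}, which implies $\{z\} \preceq \{z/2, z/2 + 1/2\}$ for every $z \in (0, 1]$, hence at the interval level
\begin{equation*}
(\gamma, \gamma'] \;\preceq\; (\gamma/2, \gamma'/2] \;\cup\; (\gamma/2 + 1/2, \gamma'/2 + 1/2]
\end{equation*}
for every $(\gamma, \gamma'] \subseteq (0, 1]$. This split is measure-preserving, and the right piece automatically lies in $(1/2, 1] \subseteq (\delta/2, 1]$ (using $\delta \leq 1$); iterating the move on the left piece will produce the desired decomposition.

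Choose $n$ to be the smallest non-negative integer with $2^n \geq 2\beta/\delta$. If $n = 0$ then $(\alpha, \beta] \subseteq (0, \delta/2]$ already, and we are done with $I = (\alpha, \beta]$ and $m = 0$. Otherwise, applying the basic move $n$ times to the ``left'' piece at each stage yields, by transitivity of $\preceq$,
\begin{equation*}
(\alpha, \beta] \;\preceq\; I_n \cup J_1 \cup \cdots \cup J_n,
\end{equation*}
where $I_k = (\alpha/2^k, \beta/2^k]$ and $J_k = (\alpha/2^k + 1/2, \beta/2^k + 1/2]$. Take $I = I_n$ and $m = n$. The geometric identity $2^{-n} + \sum_{k=1}^{n} 2^{-k} = 1$ gives the measure condition $|I| + \sum_k |J_k| = \beta - \alpha$ immediately.

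The main (and essentially only) subtlety is that $n$ must satisfy two opposing constraints: $2^n \geq 2\beta/\delta$, so that $I \subseteq (0, \delta/2]$, and $2^n < 4/\delta$, so that $|I| = (\beta - \alpha)/2^n > (\delta/4)(\beta - \alpha)$. Minimality of $n$ forces $2^n < 4\beta/\delta$, and it is precisely the hypothesis $\beta \leq 1$ that makes the two bounds on $2^n$ compatible. This is the only place in the argument where $\beta \leq 1$ is actually used, and is the quantitative content of the lemma.
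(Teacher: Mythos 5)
Your proof is correct and follows essentially the same route as the paper: iterate the duplication-formula split $(\gamma,\gamma'] \preceq (\gamma/2,\gamma'/2] \cup (\gamma/2+1/2,\gamma'/2+1/2]$ on the left piece, with the number of iterations chosen minimally so the left interval lands in $(0,\delta/2]$, the peeled-off pieces landing in $(1/2,1] \subseteq (\delta/2,1]$, and $\beta \le 1$ giving the lower bound $|I| > \frac{\delta}{4}(\beta-\alpha)$. Your choice of $n$ (least with $2^n \ge 2\beta/\delta$) is exactly the paper's choice of $m$, so there is nothing to add.
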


\begin{proof}
    If $\beta \le \delta/2$, there is nothing to prove since $I = (\alpha, \beta]$ satisfies the given conditions. If $\beta > \delta/2$, let $m$ be the least natural number such that
    \begin{equation*}
        \frac{\beta}{2^m} \le \frac{\delta}{2}.
    \end{equation*}
    Therefore, we have
    \begin{equation*}
        \frac{\beta}{2^{m-1}} > \frac{\delta}{2},
    \end{equation*}
    which further implies
    \begin{equation*}
        \frac{1}{2^m} \ge \frac{\beta}{2^m} > \frac{\delta}{4}.
    \end{equation*}
    From equation \eqref{eqn:ldf}, it is clear that
    \begin{equation*}
        (\alpha, \beta] \preceq \left(\frac{\alpha}{2}, \frac{\beta}{2}\right] \cup \left(\frac{\alpha}{2} + \frac{1}{2}, \frac{\beta}{2} + \frac{1}{2}\right].
    \end{equation*}
    The second interval is a subset of $(1/2, 1] \subseteq (\delta/2, 1]$. Thus, we do not transform it further. Through an $m$-fold application of \eqref{eqn:ldf} on the first interval, we obtain
    \begin{equation*}
        (\alpha, \beta] \preceq \left(\frac{\alpha}{2^m}, \frac{\beta}{2^m}\right] \cup J_1 \cup \dots \cup J_m,
    \end{equation*}
    where $J_i \subseteq (1/2, 1] \subseteq (\delta/2, 1]$ and the sum of the lengths of the intervals $I = (\alpha/2^m, \beta/2^m], \{J_i\}_{i \in [m]}$ is equal to $\beta - \alpha$. Moreover, the interval $I \subseteq (0, \delta/2]$ and its length is
    \begin{equation*}
        |I| = \frac{\beta - \alpha}{2^m} > \frac{\delta}{4}(\beta - \alpha).
    \end{equation*}
\end{proof}

\begin{theorem}[Landau]
\label{thm:landau}
    The gamma function $\Gamma: (0, \infty) \to \mathbb{R}$ is a germinating function.
\end{theorem}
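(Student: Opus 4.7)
The plan is to combine the functional equation \eqref{eqn:fr}, which gives $(0, \infty) \preceq (0, 1]$, with an iterated application of the preceding lemma to carve out a subset of $(0, 1]$ of arbitrarily small Lebesgue measure that still determines $\Gamma$ on $(0, 1]$. Fix $\varepsilon > 0$ and choose $\delta \in (0, 1]$ with $\delta < \varepsilon$; the target is a set $S \subseteq (0, 1]$ with $|S| < \varepsilon$ and $(0, 1] \preceq S$.

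The first step is to apply Lemma \ref{landau} to $(\alpha, \beta] = (0, 1]$ with the chosen $\delta$, producing an interval $I^{(1)} \subseteq (0, \delta/2]$ of length greater than $\delta/4$ together with finitely many intervals $J_1^{(1)}, \dots, J_{m_1}^{(1)} \subseteq (\delta/2, 1]$ whose total length is $1 - |I^{(1)}| < 1 - \delta/4$. At the next step, I would apply the lemma separately to each $J_i^{(1)}$ (each of which lies in $(0, 1]$, so the hypothesis is satisfied), extracting further sub-intervals of $(0, \delta/2]$ and a new generation of residual intervals in $(\delta/2, 1]$. Iterating this procedure $n$ times gives
\[
(0, 1] \preceq S_n \;\colonequals\; \mathcal I_n \cup \mathcal J_n,
\]
where $\mathcal I_n$ is the (finite) union of all small-side intervals produced so far, and $\mathcal J_n$ is the union of all residual intervals that still lie inside $(\delta/2, 1]$.

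The key quantitative observation is that at each application, the lemma guarantees a proportion greater than $\delta/4$ of the current interval's length is transferred to the small side. Consequently, the total length of $\mathcal J_n$ is strictly less than $(1 - \delta/4)^n$, while $\mathcal I_n$, being contained in $(0, \delta/2]$, has Lebesgue measure at most $\delta/2$. Therefore
\[
|S_n| \;\le\; \frac{\delta}{2} + \left(1 - \frac{\delta}{4}\right)^n.
\]
Choosing $\delta < \varepsilon$ and then $n$ large enough that $(1 - \delta/4)^n < \varepsilon/2$ yields $|S_n| < \varepsilon$. Combined with $(0, \infty) \preceq (0, 1] \preceq S_n$, this exhibits a fundamental set of arbitrarily small measure and establishes that $\Gamma$ is germinating.

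The main obstacle I anticipate is bookkeeping: one must be careful that the relation $\preceq$ is indeed preserved when the lemma is applied separately to each residual interval and the results are aggregated, which follows from the transitivity of $\preceq$ and closure property (2) listed in the introduction of the section. A minor subtlety is that the $\mathcal I_n$ are unions of sub-intervals of a common interval $(0, \delta/2]$, so their cumulative length may far exceed $\delta/2$ while the \emph{measure} of their union is bounded by $\delta/2$; this is precisely what makes the geometric decay of $\mathcal J_n$ buy us the arbitrarily small total measure.
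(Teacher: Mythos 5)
Your proposal is correct and follows essentially the same route as the paper: apply Lemma \ref{landau} to $(0,1]$, re-apply it to each residual interval in $(\delta/2,1]$, observe that the residual total length decays like $(1-\delta/4)^t$ while the accumulated small-side intervals all lie in $(0,\delta/2]$ and hence contribute measure at most $\delta/2$, then finish with $(0,\infty)\preceq(0,1]$ via \eqref{eqn:fr}. The only difference is cosmetic bookkeeping with $\varepsilon$ versus $\delta$.
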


\begin{proof}
    For any $\delta > 0$, we show that there exists a fundamental set for the gamma function having Lebesgue measure less than $\delta$. We begin with the fundamental interval $(\alpha, \beta] = (0, 1]$. From Lemma \ref{landau}, we obtain intervals $I_0, J_1, \dots, J_m$ such that $I_0 \subseteq (0, \delta/2]$ and $J_i \subseteq (\delta/2, 1]$ for each $i \in [m]$. Moreover, these intervals satisfy the conditions
    \begin{equation*}
        (0, 1] \preceq I \cup J_1 \cup \dots \cup J_m, \text{ and } \sum_{i=1}^m |J_i| < (1 - \delta/4).
    \end{equation*}
    Further, applying Lemma \ref{landau} to each of the $J_i$s, we obtain intervals $I_i, J_{i, 1}, \dots, J_{i, m_i}$ such that
    $I_i \subseteq (0, \delta/2]$ and $J_{i, j} \subseteq (\delta/2, 1]$ for each $j \in [m_i]$. Moreover, we have
    \begin{equation*}
        (0, 1] \preceq I \cup \left(\bigcup_{i=1}^m I_i\right) \cup  \left(\bigcup_{i=1}^m \bigcup_{j=1}^{m_i} J_{i, j}\right), \text{ and }\sum_{i=1}^m \sum_{j=1}^{m_i} |J_{i, j}| < (1 - \delta/4)^2.
    \end{equation*}
    After $t$ iterations, we can find a finite collection of intervals $\{I_a\}_{a \in A}$ and $\{J_b\}_{b \in B}$ such that $I_a \subseteq (0, \delta/2]$ for each $a \in A$ and $J_b \subseteq (\delta/2, 1]$. Moreover, these intervals satisfy the condition
    \begin{equation*}
        (0, 1] \preceq \left(\bigcup_{a \in A} I_a\right) \cup \left(\bigcup_{b \in B} J_b\right), \text{ and } \sum_{b \in B} |J_b| < (1 - \delta/4)^t.
    \end{equation*}
    We can choose $t$ so large that
    \begin{equation*}
        (1 - \delta/4)^t < \delta/2,
    \end{equation*}
    or equivalently
    \begin{equation*}
        t < \frac{\log (\delta/2)}{\log (1 - \delta/4)}.
    \end{equation*}
    Thus, we have found a fundamental set for the gamma function having Lebesgue measure less than $\delta$.
\end{proof}

The following result states that the above theorem is sharp and cannot be improved further, thereby settling completely Legendre's original problem of finding fundamental sets having measure as small as possible, for the gamma function.

\begin{theorem}
\label{thm:zero}
    Let $A \subseteq (0, \infty)$ be a set with Lebesgue measure zero. Then, $A$ is not a fundamental set for the gamma function.
\end{theorem}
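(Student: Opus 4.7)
The plan is to exploit the fact that each of the three functional identities \eqref{eqn:fr}, \eqref{eqn:erf}, and \eqref{eqn:gmf} relates values of $\Gamma$ at finitely many points whose arguments are related by affine maps with rational coefficients. Since affine maps are Lipschitz and therefore preserve Lebesgue null sets, and since the totality of possible instantiations of these identities is countable (parameterised by the integer $n$ in \eqref{eqn:gmf} together with an index $j \in \{0, 1, \dots, n-1\}$), I expect the set of points reachable from a null set via finitely many applications to remain null, which will contradict the infinite measure of $(0, \infty)$.

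To make this precise, I would define the reach of $S$ inductively: set $R_0(S) := S$, and for $k \ge 1$ let $R_k(S)$ consist of $R_{k-1}(S)$ together with every $x \in (0, \infty)$ for which some instantiation of one of \eqref{eqn:fr}, \eqref{eqn:erf}, \eqref{eqn:gmf} involves $x$ while every other point of the instantiation already lies in $R_{k-1}(S)$. Then $S$ is a fundamental set precisely when $\bigcup_{k \ge 0} R_k(S) = (0, \infty)$. I would prove by induction that $|R_k(S)| = 0$ for all $k$. For the inductive step, a case analysis on the identity used shows that each newly added point lies in one of the following Lipschitz images of $R_{k-1}(S)$: from \eqref{eqn:fr}, $(R_{k-1}(S)+1) \cup (R_{k-1}(S)-1)$; from \eqref{eqn:erf}, $1 - R_{k-1}(S)$; and from \eqref{eqn:gmf} with parameter $n$, either $n\, R_{k-1}(S)$ (if $x$ is the ``top'' point $z$, using that $z/n \in R_{k-1}(S)$) or $\tfrac{1}{n} R_{k-1}(S) + \tfrac{j}{n}$ for some $j \in \{0, \dots, n-1\}$ (if $x = z/n + j/n$). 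Each of these is a null set, and the union over $n \ge 2$ and $j$ is countable, so $R_k(S)$ is null.

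Combining the inductive conclusion with countable additivity, $\bigcup_{k \ge 0} R_k(S)$ has Lebesgue measure zero and hence cannot equal $(0, \infty)$, so $S$ is not a fundamental set. The main obstacle, in my view, is correctly formalising ``derivation via finitely many applications of \eqref{eqn:fr}, \eqref{eqn:erf}, \eqref{eqn:gmf}'' so that the recursive reach $R_k(S)$ genuinely captures every inference from $\Gamma|_S$; once that bookkeeping is in place, the case analysis is routine. A minor subtlety is that \eqref{eqn:erf} and \eqref{eqn:fr} could push arguments outside $(0, \infty)$, which is handled by restricting attention to instantiations whose points all lie in $(0, \infty)$ without affecting the null-set conclusion.
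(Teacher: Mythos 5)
Your argument is correct and is essentially the paper's own proof: both formalise the set of points reachable in at most $k$ single-identity inferences, observe that each new point lies in one of countably many affine images of the previous stage (affine maps and countable unions preserve Lebesgue-null sets), and conclude by induction that the reachable set is null, hence not all of $(0,\infty)$. The only difference is cosmetic: you list the relevant affine maps explicitly, while the paper packages them as the $k(k-1)$ transformations $T_e$ attached to each identity $e$.
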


\begin{proof}
    Let $E$ be the countable collection of identities containing \eqref{eqn:fr}, \eqref{eqn:erf}, and \eqref{eqn:gmf} for each $n \in \mathbb{N}$. For $i \in \mathbb{N}$, let $A_i$ be the set of points on which the value of the gamma function can be determined by at most $i$ applications of identities in $E$ to the set of values of the gamma function on the set $A_0 = A$. Let us choose any identity $e \in E$. Then, $e$ contains finitely many, say $k$, gamma values. For example, the $n=2$ instance of \eqref{eqn:gmf} contains $k=3$ gamma values. Let $T_e$ be the collection of $k(k-1)$ affine transformations which take the argument of one of these $k$ gamma values to that of another. Then, it is easy to check that 
    $$
    A_i \subseteq A_{i-1} \cup \left(\bigcup_{e \in E} \bigcup_{t \in T_e} t(A_{i-1})\right)
    $$ 
    for each $i \in \mathbb{N}$. Therefore, $\lambda(A_{i-1}) = 0$ implies $\lambda(A_i) = 0$ since the Lebesgue measure $\lambda$ is complete. Then, it follows by induction that $\lambda(A_i) = 0$ for each $i \in \mathbb{N} \cup \{0\}$. The set of points on which the values of the gamma function can be determined using finitely many identities in the set $E$ is equal to 
    $B = \bigcup_{i = 0}^{\infty}  A_i$. 
    It follows that $\lambda(B) = \sum_{i=0}^\infty \lambda(A_i) = 0$, and hence $A$ is not a fundamental set for the gamma function.
\end{proof}

Throughout the above discussion, we considered values of gamma function only at positive real points. However, it is possible to extend these results to the full domain of definition $D = \C \setminus \{0, -1, -2, \dots\}$, of the gamma function. The theorem given below is an analogue of Theorem \ref{thm:landau}.

\begin{theorem}
    For any $\delta > 0$, there exists a measurable subset $S \subseteq D$ with Lebesgue measure less than $\delta$ so that $\Gamma|_{S}$ determines $\Gamma$ completely through finitely many applications of functional identities.
\end{theorem}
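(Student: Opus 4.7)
The plan is to parallel the proof of Theorem \ref{thm:landau} via two successive reductions, exploiting the scaling inherent in the Gauss multiplication formula \eqref{eqn:gmf} to compress the imaginary direction. First, I would reduce from $D$ to the vertical strip $\Sigma = \{z \in \C : 0 < \text{Re}(z) \leq 1\}$ using the functional equation \eqref{eqn:fr}. Second, I would reduce $\Sigma$ (which has infinite measure) to the narrow horizontal band
\begin{equation*}
    S_\epsilon = \{w \in \C : 0 < \text{Re}(w) \leq 1 \text{ and } |\text{Im}(w)| \leq \epsilon\}
\end{equation*}
of Lebesgue measure $2\epsilon$, via a single application of \eqref{eqn:gmf} per point with parameter $n$ depending on $|\text{Im}(z)|$.

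For the first reduction, given $z \in D$ with $\text{Re}(z) > 1$, iterated use of $\Gamma(z) = (z-1)\Gamma(z-1)$ brings the argument into $\Sigma$, and for $\text{Re}(z) \leq 0$ iterated use of $\Gamma(z) = \Gamma(z+1)/z$ does the same; the hypothesis $z \in D$ rules out any division by zero along the way, and only finitely many steps are needed for each $z$.

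For the second reduction, given $z = x + iy \in \Sigma$ with $|y| > \epsilon$, I would set $n = \lceil |y|/\epsilon \rceil \geq 2$ and apply \eqref{eqn:gmf} once to write $\Gamma(z)$ as an explicit constant times $\prod_{j=0}^{n-1} \Gamma(z/n + j/n)$. Each reduced argument has real part $(x+j)/n \in (0, 1]$ (since $x \in (0, 1]$ and $0 \leq j \leq n-1$) and imaginary part $y/n$ of modulus at most $\epsilon$, so it lies in $S_\epsilon$. For $z \in \Sigma$ with $|y| \leq \epsilon$ no reduction is needed. Choosing $\epsilon < \delta/2$ gives $|S_\epsilon| < \delta$ and completes the construction.

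The main conceptual obstacle is that neither \eqref{eqn:fr} nor \eqref{eqn:erf} alters the imaginary part (other than by sign), so these identities alone cannot shrink a horizontal strip of infinite measure. The crucial observation is that the substitution $z \mapsto z/n$ in \eqref{eqn:gmf} simultaneously scales both real and imaginary parts by $1/n$, so by allowing $n$ to depend on $|\text{Im}(z)|$ one achieves the required compression in the imaginary direction at the cost of just one identity application per point; no iteration analogous to that in the proof of Theorem \ref{thm:landau} is required for this complex analogue.
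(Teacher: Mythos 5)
Your argument is correct, but it takes a genuinely different route from the paper. The paper's proof leans on the real-variable result: it takes a one-dimensional Landau fundamental set $I \subseteq (0,\infty)$ of measure less than $\delta/2$ (from Theorem \ref{thm:landau}), sets $S = \{x+iy : x \in I,\ |y|<1\}$, and then disposes of large imaginary parts by iterating the duplication formula \eqref{eqn:ldf}, showing $S_n \preceq S_{n-1}$ for the exhausting strips $S_n = \{x>0,\ |y|<2^n\}$; the smallness of the measure there comes entirely from the real direction, and the number of applications per point grows like $\log|\mathrm{Im}\,z|$. You instead keep the full real cross-section $(0,1]$ and obtain smallness from the imaginary direction: after the routine reduction $D \preceq \Sigma$ via \eqref{eqn:fr}, a single application of Gauss's multiplication formula \eqref{eqn:gmf} with $n = \lceil |y|/\epsilon\rceil$ lands all the arguments $(x+j)/n + iy/n$ in the thin rectangle $S_\epsilon$ of measure $2\epsilon$, and the checks you make (real parts in $(0,1]$, $|y/n|\le\epsilon$, no division by zero since $z \in D$, per-point finiteness) are exactly the ones needed. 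What each approach buys: yours is self-contained and shorter, bypassing Lemma \ref{landau} and Theorem \ref{thm:landau} entirely and using only one multiplication-formula application per point, at the cost of invoking \eqref{eqn:gmf} with unboundedly large $n$; the paper's proof uses only the fixed identity $n=2$ plus \eqref{eqn:fr}/\eqref{eqn:erf} and exhibits the complex statement as a direct outgrowth of the real one, which is in keeping with the earlier sections but requires the full Landau machinery.
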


\begin{proof}
    Let $\delta > 0$. For each $n \in \mathbb{N} \cup \{0\}$, we define the set $S_n$ by
    \begin{equation*}
        S_n = \{x + i y: x \in (0, \infty)\text{ and }-2^{n} < y < 2^{n}\}.
    \end{equation*}
    Further, we define
    \begin{equation*}
        S = \{x + i y: x \in I\text{ and }-1 < y < 1\},
    \end{equation*}
    where $I \subseteq (0, \infty)$ is a measurable fundamental set for the gamma function with $\lambda(I) < \frac{\delta}{2}$, as constructed in the proof of Theorem \ref{thm:landau}. We claim that $S$ is the required set. To show this, we first note that $S$ is Lebesgue measurable and $\lambda(S) < 2 \cdot \frac{\delta}{2} = \delta$. Next, we show that $S_0 \preceq S$ ($\preceq$ can be defined for subsets of $D$ in the same way as before). This can be shown by imitating the proof of Theorem \ref{thm:landau} combined with that fact that
    \begin{equation*}
        \text{Re}\left(\frac{z}{2} + \frac{m}{2}\right) = \frac{\text{Re}(z)}{2} + \frac{m}{2},
    \end{equation*}
    for $m \in \{0, 1\}$, and
    \begin{equation*}
        -1 < \text{Im}\left(\frac{z}{2}\right)= \text{Im}\left(\frac{z}{2}+\frac{1}{2}\right) < 1,
    \end{equation*}
    for $|\text{Im}(z)| < 1$. Now, we show that $S_n \preceq S_{n-1}$ for each $n \in \mathbb{N}$. This follows directly from duplication formula since $z/2, z/2+1/2 \in S_{n-1}$ for $z \in S_n$. Using the transitive property of $\preceq$, we conclude that $S_n \preceq S$ for each $n \in \mathbb{N}$. For any point $z_0$ in the right half-plane $R$, there exists an $n_0 \in \mathbb{N}$ such that $z_0 \in S_{n_0}$. Therefore, we have $R \preceq S$. Further, using \eqref{eqn:erf} (or by repeated application of \eqref{eqn:fr}), we obtain $D \preceq R$, which when combined with $R \preceq S$ gives $D \preceq S$.
\end{proof}

Finally, we remark that there does not exist any set $S \subseteq D$ having Lebesgue measure zero such that $\Gamma|_{S}$ determines $\Gamma$ completely on its domain through finitely many applications of \eqref{eqn:fr}, \eqref{eqn:erf} and \eqref{eqn:gmf}. The proof of this statement is exactly the same as that of Theorem \ref{thm:zero}.

However, there still remains the question of the existence of minimal fundamental sets leading to the following interesting problem.

\begin{problem}[Minimal fundamental sets]
    Do minimal fundamental sets for the gamma function exist? If so, does there exist a measurable/non-measurable minimal fundamental set?
\end{problem}

\section{Schl\"omilch's generalization of the duplication formula}

While the refection formula and the duplication formula are multiplicative in nature, there is a beautiful additive formula due to O. Schl\"omilch \cite[Section 5]{schlomilch} which generalizes the duplication formula. The result of Schl\"omilch seems to have passed into oblivion since there is no mention of it in modern works on special functions. In this section, we further generalize the formula of Schl\"omilch. The original proof of Schl\"omilch employs certain remarkable transformations of integrals but as such cannot be adapted to prove our proposed generalization. We have thus included both the original proof of Schl\"omich as well as a new proof of the generalized version. 

\begin{theorem}[Schl\"omilch]
    \label{thm:schlomilch}
    Let $m$ be a non-negative integer and $z \in \C$ be such that $\text{Re}(z) > m$. Then,
    \begin{equation}
    \label{eqn:sch}
        \frac{2^{z-1}}{\sqrt{\pi}}\Gamma\left(\frac{z+m+1}{2}\right)\Gamma\left(\frac{z-m}{2}\right) = \sum_{n = 0}^{m} \frac{\Gamma(z-n)}{2^n n!} (m-n+1)_{2n},
    \end{equation}
    where $(.)_{2n}$ is the Pochhammer symbol.
\end{theorem}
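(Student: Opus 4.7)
My plan is to recast the right-hand side as a terminating Gaussian hypergeometric series, invoke Bailey's second summation theorem, and then reconcile the resulting expression with the left-hand side using the reflection and duplication formulae already established.

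The first step is to rewrite the sum hypergeometrically. Using $\Gamma(z-n)=(-1)^n\Gamma(z)/(1-z)_n$ together with the factorisation $(m-n+1)_{2n}=(m+n)!/(m-n)! = (-1)^n(-m)_n(m+1)_n$, the right-hand side of the claimed identity collapses to
\[
\Gamma(z)\sum_{n=0}^{m}\frac{(-m)_n(m+1)_n}{(1-z)_n\,n!}\Bigl(\tfrac{1}{2}\Bigr)^n \;=\; \Gamma(z)\,\F21\!\Bigl(-m,\,m+1;\,1-z;\,\tfrac{1}{2}\Bigr).
\]
This is an instance of $\F21(a,1-a;c;1/2)$ with $a=-m$ and $c=1-z$, to which Bailey's summation
\[
\F21\!\Bigl(a,\,1-a;\,c;\,\tfrac{1}{2}\Bigr) \;=\; \frac{\Gamma(c/2)\,\Gamma((c+1)/2)}{\Gamma((c+a)/2)\,\Gamma((c-a+1)/2)}
\]
applies to give $\mathrm{RHS}=\Gamma(z)\,\Gamma((1-z)/2)\,\Gamma((2-z)/2)/[\Gamma((1-z-m)/2)\,\Gamma((2-z+m)/2)]$. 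Applying Legendre's duplication formula to $\Gamma(1-z)$ collapses the numerator to $\sqrt{\pi}\,2^{z}\,\Gamma(z)\Gamma(1-z)$, whereupon Euler's reflection $\Gamma(z)\Gamma(1-z)=\pi/\sin\pi z$ yields
\[
\mathrm{RHS} \;=\; \frac{2^{z}\,\pi^{3/2}}{\sin\pi z\;\Gamma((1-z-m)/2)\,\Gamma((2-z+m)/2)}.
\]

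For the left-hand side, I would apply Euler's reflection formula to both $\Gamma((z+m+1)/2)$ and $\Gamma((z-m)/2)$, producing precisely the same two gamma factors in the denominator. The two accompanying sines combine via a product-to-sum identity, and the remarkable cancellation
\[
\sin\!\bigl(\pi(z+m+1)/2\bigr)\,\sin\!\bigl(\pi(z-m)/2\bigr) \;=\; \tfrac{1}{2}\sin\pi z,
\]
valid for every integer $m$ because $\cos(\pi m+\pi/2)=0$, then delivers an expression matching the simplified form of $\mathrm{RHS}$ above, completing the argument.

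The principal obstacle is justifying Bailey's theorem when $c=1-z$ happens to take a non-positive integer value, since the gamma factors on the right then develop poles while the terminating sum stays regular. I would handle this by clearing the common gamma factors and reading the final identity as one between meromorphic functions on $\C$: verification on any open subset of $\{\mathrm{Re}\,z>m\}$ where every quantity is classically defined extends to the full statement by the identity theorem. Bailey's summation itself may be cited as a classical result (obtainable from a quadratic transformation applied to Kummer's theorem), or, in the present terminating case alone, proved independently by induction on $m$ using the contiguous relations of $\F21$, so no results beyond those already developed in the paper are strictly needed.
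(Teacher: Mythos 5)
Your proposal is correct, and the computations check out: the rewriting $\Gamma(z-n)=(-1)^n\Gamma(z)/(1-z)_n$, $(m-n+1)_{2n}=(-1)^n(-m)_n(m+1)_n$ turns the right-hand side into $\Gamma(z)\,{}_2F_1(-m,m+1;1-z;1/2)$, Bailey's theorem applies with $a=-m$, $c=1-z$, and the sine identity $\sin\bigl(\pi(z+m+1)/2\bigr)\sin\bigl(\pi(z-m)/2\bigr)=\tfrac12\sin\pi z$ is valid for integer $m$; your appeal to the identity theorem to cover integer values of $z$ (where $\sin\pi z$ and the auxiliary gamma factors degenerate while both sides of \eqref{eqn:sch} remain holomorphic on $\mathrm{Re}\,z>m$) closes the only gap. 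However, this is a genuinely different route from the paper's own proof of Theorem \ref{thm:schlomilch}, which follows Schl\"omilch's original method: it starts from the expansion of $\cos ku$ ($k=2m+1$) in powers of $\sin u$, converts it to an algebraic identity in $x=e^{iu}$, divides by $(x^2+x^{-2})^{z+1/2}$ and integrates, recognizing every resulting integral as a beta integral and finishing with the beta--gamma relation. That argument is more elementary and self-contained (nothing beyond \eqref{eqn:bg} is needed), whereas yours is shorter, purely manipulative, and in fact mirrors the strategy the paper itself uses for the \emph{generalized} theorem later in the same section, where the series is identified as a ${}_2F_1$ at $1/2$ and summed via Euler's transformation plus Gauss's second summation theorem (Bailey's theorem, which you invoke, is the companion result \cite[Theorem 3.5.4(ii)]{andrews} and is equivalent to that combination up to a Pfaff/Euler transformation). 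In effect you have proved the terminating case directly by the hypergeometric method, which the paper only obtains as the specialization $w=(z+m+1)/2$, $z\mapsto(z-m)/2$ of its general result; what you lose relative to the paper's Section 4 proof is the elementary, integral-transform flavour and the historical fidelity to Schl\"omilch, and what you gain is brevity and a clear view of why the identity is a disguised ${}_2F_1(1/2)$ evaluation.
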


\begin{proof}
    We begin with  the following well-known identity for $\cos ku$ \cite[p. 180]{bromwich}, where $k = 2m + 1$, is an odd natural number and $u \in \mathbb{C}$.
    \begin{equation}
    \label{eqn:cosine}
        \cos ku = \cos u \sum_{n=0}^{m} \frac{(-1)^n}{(2n)!} \sin^{2n}u \left\{\prod_{j=1}^{n} (k^2 - (2j-1)^2)\right\}.
    \end{equation}
    Setting $x = e^{iu}$ and using
    \begin{equation*}
        \cos ku = \frac{1}{2} \left( x^k + \frac{1}{x^k} \right),\;
        \cos u  = \frac{1}{2} \left( x + \frac{1}{x} \right),\;\text{and}\;\sin u  = \frac{i}{2} \left( x - \frac{1}{x} \right),
    \end{equation*}
    in \eqref{eqn:cosine}, we obtain
    \begin{equation*}
        x^k + \frac{1}{x^k} = \left( x + \frac{1}{x} \right) \sum_{n=0}^{m} \frac{1}{2^{2n}(2n)!} \left(x - \frac{1}{x}\right)^{2n} \left\{\prod_{j=1}^{n} (k^2 - (2j-1)^2)\right\}.
    \end{equation*}
    Dividing both sides by $x$ and using $k = 2m + 1$, we obtain
    \begin{equation}
    \label{eqn:sch-1}
        x^{2m} + \frac{1}{x^{2m+2}} = \left(1 + \frac{1}{x^2}\right) \sum_{n=0}^m \frac{1}{(2n)!} \left(x - \frac{1}{x}\right)^{2n} (m - n + 1)_{2n}.
    \end{equation}
    For $n = 0, 1, \dots, m$, let us define
    \begin{equation}
    \label{eqn:M}
        M_{2n} = \frac{1}{(2n)!} (m - n + 1)_{2n},
    \end{equation}
    so that \eqref{eqn:sch-1} can be written as
    \begin{equation*}
        x^{2m} + \frac{1}{x^{2m+2}} = \left( 1 + \frac{1}{x^2} \right) \sum_{n=0}^m M_{2n} \left(x - \frac{1}{x}\right)^{2n}.
    \end{equation*}
    Let $z$ be a complex number with $\text{Re}(z) > m$. Dividing the above equation by $\left(x^2 + \frac{1}{x^2}\right)^{z+\frac{1}{2}}$ and integrating from $0$ to $1$, we obtain
    \begin{equation}
    \label{eqn:int}
        \int_0^1 \frac{x^{2m} dx}{\left(x^2 + \frac{1}{x^2}\right)^{z+\frac{1}{2}}} + \int_0^1 \frac{dx}{x^{2m + 2} \left(x^2 + \frac{1}{x^2}\right)^{z+\frac{1}{2}}} = \sum_{n=0}^m M_{2n} \int_0^1 \frac{\left(x - \frac{1}{x}\right)^{2n}}{\left(x^2 + \frac{1}{x^2}\right)^{z+\frac{1}{2}}} \left(1 + \frac{1}{x^2}\right) dx.
    \end{equation}
    Substituting $y = \frac{1}{x}$ in the second integral appearing in the above equation, the left hand side equals
    \begin{align*}
        \int_0^\infty \frac{y^{2m + 2z + 1} dy}{\left(y^4 + 1\right)^{z+\frac{1}{2}}}.
    \end{align*}
    Further, substituting $r = y^4$ in the above expression, we recognize it as a beta integral and appealing to the beta-gamma relation \eqref{eqn:bg}, we get
    \begin{equation}
    \label{eqn:lhs}
        \textrm{LHS of }\eqref{eqn:int} = \frac{1}{4} \frac{\Gamma \left( \frac{z + m + 1}{2} \right) \Gamma \left( \frac{z - m}{2} \right)}{\Gamma \left( z + \frac{1}{2} \right)}.
    \end{equation}
    The integrals appearing on the right hand side  of \eqref{eqn:int} are likewise beta integrals as is evident from the discussion that follows.
    \begin{equation*}
        \int_0^1 \frac{\left(x - \frac{1}{x}\right)^{2n}}{\left(x^2 + \frac{1}{x^2}\right)^{z+\frac{1}{2}}} \left(1 + \frac{1}{x^2}\right) dx = \int_0^1 \frac{\left(\frac{1}{x} - x\right)^{2n}}{\left(2 + \left(\frac{1}{x} - x\right)^2 \right)^{z+\frac{1}{2}}} \left(1 + \frac{1}{x^2}\right) dx.
    \end{equation*}
    Upon substituting $z = \frac{1}{x} - x$ followed by $r = \frac{z^2}{2}$, the above integral takes the form
    \begin{equation*}
        \frac{2^n}{2^{z+1}} \int_0^\infty \frac{r^{n - \frac{1}{2}} dr}{(1 + r)^{z+\frac{1}{2}}},
    \end{equation*}
    which, as asserted is a beta integral in view of \eqref{eqn:beta}. Appealing to the beta-gamma relation \eqref{eqn:bg}, we obtain
    \begin{equation}
    \label{eqn:rhs}
        \frac{2^n}{2^{z+1}} \int_0^\infty \frac{r^{n - \frac{1}{2}} dr}{(1 + r)^{z+\frac{1}{2}}} = \frac{2^n}{2^{z+1}} \frac{\Gamma\left(n + \frac{1}{2}\right) \Gamma(z - n)}{\Gamma\left(z + \frac{1}{2}\right)}.
    \end{equation}
    Using \eqref{eqn:lhs} and \eqref{eqn:rhs} in \eqref{eqn:int}, and multiplying both sides by $\Gamma\left(z + \frac{1}{2}\right)$, we obtain
    \begin{equation}
        \frac{1}{4} \Gamma \left( \frac{z + m + 1}{2} \right) \Gamma \left( \frac{z - m}{2} \right) = \frac{1}{2^{z+1}} \sum_{n=0}^m 2^n M_{2n} \Gamma\left(n + \frac{1}{2}\right) \Gamma(z - n).
    \end{equation}
    Further substituting the values of $M_{2n}$ from \eqref{eqn:M} and using the known values
    \begin{equation*}
        \Gamma\left(n + \frac{1}{2}\right) = \frac{(2n)!}{2^{2n}n!}\sqrt{\pi},
    \end{equation*}
    of the gamma function, we obtain the desired result.
\end{proof}

Observe that when $m = 0$, the above theorem reduces to the duplication formula of Legendre. Further, setting $z = m + 2l + 1$ for some non-negative integer $l$, in \eqref{eqn:sch} gives
\begin{equation*}
    \binom{m + l}{m} = \sum_{n=0}^m \frac{1}{2^{m + n}} \binom{m + n}{m} \binom{2l + m - n}{2l},
\end{equation*}
which bears some resemblance with the Chu-Vandermonde identity.

We now state and prove a generalization of the above theorem. The proof involves expressing the infinite series in the right hand side of \eqref{eqn:sch} as a hypergeometric function and application of some hypergeometric identities and transformations along with Euler's reflection formula.

\begin{theorem}
    Let $w, z \in \mathbb{C}$ be such that $w+z-1/2$ is non-integer and $z, w$ are not non-positive integers. Then,
    \begin{equation}
        \frac{1}{\sqrt{2\pi}} \frac{2^{w+z} \Gamma(w) \Gamma(z)}{1 - \cot{w\pi} \cot{z\pi}} = \sum_{n = 0}^{\infty} \frac{\Gamma(w+z-n-1/2)}{2^n n!} (w-z-n+1/2)_{2n},
    \end{equation}
    where $(.)_{2n}$ is the Pochammer symbol. 
\end{theorem}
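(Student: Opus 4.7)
The plan is to recognise the right-hand side as an evaluation of a Gauss hypergeometric function at $1/2$, apply a classical summation theorem of Bailey, and then use Legendre's duplication formula together with Euler's reflection formula to translate the result into the cotangent form appearing on the left.

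First, set $a=w+z-1/2$ and $b=w-z+1/2$, so the $n$th summand on the right is
\[
\frac{\Gamma(a-n)}{2^n\, n!}\,(b-n)_{2n}
=\frac{\Gamma(a-n)}{2^n\, n!}\,\frac{\Gamma(b+n)}{\Gamma(b-n)}.
\]
Using $\Gamma(a-n)=(-1)^n\Gamma(a)/(1-a)_n$ and, similarly, $\Gamma(b-n)=(-1)^n\Gamma(b)/(1-b)_n$, together with $\Gamma(b+n)=(b)_n\Gamma(b)$, the two factors of $(-1)^n$ cancel and the series collapses to
\[
\Gamma(a)\sum_{n=0}^{\infty}\frac{(b)_n(1-b)_n}{(1-a)_n\, n!}\,\frac{1}{2^n}
=\Gamma\!\left(w+z-\tfrac12\right)\,{}_2F_1\!\left(w-z+\tfrac12,\;z-w+\tfrac12;\;\tfrac32-w-z;\;\tfrac12\right).
\]
The hypotheses that $w+z-1/2$ is non-integer and that $w,z$ are not non-positive integers ensure that all gamma factors and the $(1-a)_n$ denominators are well defined, and standard growth estimates on $(b)_n(1-b)_n/(1-a)_n$ give absolute convergence at $x=1/2$.

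Next I would apply Bailey's summation theorem
\[
{}_2F_1(a,\,1-a;\,c;\,\tfrac12)
=\frac{\Gamma(c/2)\,\Gamma((c+1)/2)}{\Gamma((c+a)/2)\,\Gamma((c-a+1)/2)},
\]
to $a=w-z+1/2$ and $c=3/2-w-z$. A direct computation shows $(c+a)/2=1-z$ and $(c-a+1)/2=1-w$, so the right-hand side of the theorem becomes
\[
\Gamma\!\left(w+z-\tfrac12\right)\,\frac{\Gamma\!\left(\tfrac34-\tfrac{w+z}{2}\right)\Gamma\!\left(\tfrac54-\tfrac{w+z}{2}\right)}{\Gamma(1-w)\,\Gamma(1-z)}.
\]
An application of Legendre's duplication formula with $x=3/4-(w+z)/2$ collapses the two gammas in the numerator into $\sqrt{\pi}\,2^{w+z-1/2}\,\Gamma(3/2-w-z)$.

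Finally, I would invoke Euler's reflection formula three times: on the product $\Gamma(w+z-1/2)\Gamma(3/2-w-z)$, which via $\sin(\pi x-\pi/2)=-\cos\pi x$ equals $-\pi/\cos\pi(w+z)$, and on the pairs $\Gamma(w)\Gamma(1-w)$ and $\Gamma(z)\Gamma(1-z)$ to rewrite $1/[\Gamma(1-w)\Gamma(1-z)]=\Gamma(w)\Gamma(z)\sin\pi w\sin\pi z/\pi^2$. Assembling everything gives
\[
\mathrm{RHS}
=\frac{2^{w+z}\,\Gamma(w)\Gamma(z)}{\sqrt{2\pi}}\cdot\frac{-\sin\pi w\sin\pi z}{\cos\pi(w+z)}.
\]
The identity $\cos\pi(w+z)=\cos\pi w\cos\pi z-\sin\pi w\sin\pi z$, divided through by $\sin\pi w\sin\pi z$, turns the trigonometric quotient into $1/(1-\cot\pi w\cot\pi z)$, completing the proof.

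The main obstacle is the algebraic step of matching Bailey's theorem to this particular series: one must recognise that the natural hypergeometric encoding of the right-hand side puts it precisely in the $(a,1-a;c)$ form, and that the apparently miraculous simplifications $(c+a)/2=1-z$ and $(c-a+1)/2=1-w$ are exactly what will feed into Euler's reflection formula to produce the $\cot\pi w\cot\pi z$ expression. The remaining work, modulo this observation, is a routine but careful bookkeeping exercise with the duplication and reflection formulas.
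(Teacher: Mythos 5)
Your proposal is correct and follows essentially the same route as the paper: rewrite the series as $\Gamma(w+z-1/2)\,{}_2F_1(w-z+\tfrac12,\,z-w+\tfrac12;\,\tfrac32-w-z;\,\tfrac12)$, evaluate it in closed form, and finish with the duplication and reflection formulas to produce the cotangent expression. The only (cosmetic) difference is that you invoke Bailey's theorem directly on the $(a,1-a;c)$ form, whereas the paper first applies Euler's transformation and then Gauss's second summation theorem --- two equivalent ways of citing the same classical evaluation.
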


\begin{proof}
    We begin with the RHS and show that it is equal to the LHS. Multiplying and dividing the RHS by $\Gamma(w + z - 1/2)$, and expressing the Pochammer symbol as a gamma quotient, we obtain
    \begin{equation}
    \label{eqn:sum}
        \sum_{n = 0}^{\infty} \frac{\Gamma(w+z-n-1/2)}{2^n n!} (w-z-n+1/2)_{2n} = \Gamma(w+z-1/2) \sum_{n = 0}^{\infty} \frac{\Gamma(w+z-n-1/2)}{\Gamma(w+z-1/2)} \frac{\Gamma(w-z+n+1/2)}{\Gamma(w-z-n+1/2)} \frac{(1/2)^n}{n!}.
    \end{equation}
    The coefficient in the summation on the RHS of the above equation can further be written as
    \begin{equation}
    \label{eqn:coeff}
         \frac{\Gamma(w+z-n-1/2)}{\Gamma(w+z-1/2)} \frac{\Gamma(w-z+n+1/2)}{\Gamma(w-z-n+1/2)} = (w-z+1/2)_n \frac{\Gamma(w+z-n-1/2)}{\Gamma(w+z-1/2)} \frac{\Gamma(w-z+1/2)}{\Gamma(w-z-n+1/2)}.
    \end{equation}
    Now using Euler's reflection formula (or the functional relation \eqref{eqn:fr}), we have
    \begin{equation*}
        \frac{\Gamma(w-z+1/2)}{\Gamma(w-z-n+1/2)} = (-1)^n \frac{\Gamma(z-w+n+1/2)}{\Gamma(z-w+1/2)},
    \end{equation*}
    and
    \begin{equation*}
        \frac{\Gamma(w+z-n-1/2)}{\Gamma(w+z-1/2)} = (-1)^n \frac{\Gamma(3/2-w-z)}{\Gamma(3/2-w-z+n)}.
    \end{equation*}
    Using the above two equations in \eqref{eqn:coeff} and expressing gamma quotients as Pochammer symbols, we obtain
    \begin{equation*}
         \frac{\Gamma(w+z-n-1/2)}{\Gamma(w+z-1/2)} \frac{\Gamma(w-z+n+1/2)}{\Gamma(w-z-n+1/2)} = \frac{(w-z+1/2)_n (z-w+1/2)_n}{(3/2-w-z)_n}.
    \end{equation*}
    Using the above equation in \eqref{eqn:sum}, we get
    \begin{equation*}
        \sum_{n = 0}^{\infty} \frac{\Gamma(w+z-n-1/2)}{2^n n!} (w-z-n+1/2)_{2n} = \Gamma(w+z-1/2) \sum_{n = 0}^{\infty} \frac{(w-z+1/2)_n (z-w+1/2)_n}{(3/2-w-z)_n} \frac{(1/2)^n}{n!}.
    \end{equation*}
    The infinite sum on the RHS of the above equation can be identified as a hypergeometric function to obtain
    \begin{equation*}
        \sum_{n = 0}^{\infty} \frac{\Gamma(w+z-n-1/2)}{2^n n!} (w-z-n+1/2)_{2n} = \Gamma(w+z-1/2) \hphantom{|}_2 F_1 (w-z+1/2,z-w+1/2;3/2-w-z;1/2).
    \end{equation*}
    Applying Euler's transformation \cite[Theorem 2.2.5]{andrews} to the hypergeometric function in the above equation, we get
    \begin{equation*}
        \sum_{n = 0}^{\infty} \frac{\Gamma(w+z-n-1/2)}{2^n n!} (w-z-n+1/2)_{2n} = \Gamma(w+z-1/2) (1-1/2)^{1/2-w-z} \hphantom{|}_2 F_1 (1-2w,1-2z;3/2-w-z;1/2).
    \end{equation*}
    The parameters of the hypergeometric function ${}_2 F_1 (a,b;c;1/2)$ in the above equation obey the condition $c = \frac{1}{2}(a + b + 1)$, and hence using Gauss's second summation theorem \cite[Theorem 3.5.4(i)]{andrews}, we obtain
    \begin{equation*}
        \sum_{n = 0}^{\infty} \frac{\Gamma(w+z-n-1/2)}{2^n n!} (w-z-n+1/2)_{2n} = 2^{w+z-1/2} \Gamma(w+z-1/2) \frac{\Gamma(1/2) \Gamma(3/2-w-z)}{\Gamma(1-w)\Gamma(1-z)},
    \end{equation*}
    which can further be written as
    \begin{equation*}
        \sum_{n = 0}^{\infty} \frac{\Gamma(w+z-n-1/2)}{2^n n!} (w-z-n+1/2)_{2n} = 2^{w+z-1/2} \Gamma(1/2) \frac{\Gamma(w) \Gamma(z) \Gamma(w+z-1/2)\Gamma(3/2-w-z)}{\Gamma(w) \Gamma(1-w) \Gamma(z) \Gamma(1-z)}.
    \end{equation*}
    Now, applying Euler's reflection formula to the gamma products $\Gamma(s)\Gamma(1-s)$ for $s = w, z,\text{ and }w+z-1/2$, the above equation reduces to
    \begin{align*}
        \sum_{n = 0}^{\infty} \frac{\Gamma(w+z-n-1/2)}{2^n n!} (w-z-n+1/2)_{2n} &= 2^{w+z-1/2} \Gamma(1/2) \frac{\Gamma(w) \Gamma(z) \sin \pi w \sin \pi z}{\pi \sin \pi (w+z-1/2)},\\
        &= - 2^{w+z-1/2} \Gamma(1/2) \frac{\Gamma(w) \Gamma(z) \sin \pi w \sin \pi z}{\pi \cos \pi (w+z)}.
    \end{align*}
    Using the known value $\Gamma(1/2) = \sqrt{\pi}$ and the angle-sum formula for cosine in the above equation yields the desired result.
\end{proof}

We finally remark that replacing $w$ with $\frac{z+m+1}{2}$ and $z$ with $\frac{z-m}{2}$ in the above theorem yields Theorem \ref{thm:schlomilch}.

\end{document}